 \newcommand{\nc}{\newcommand}
\nc{\bb}{\mathfrak{b} }
 \nc{\cc}{\mathfrak{c} }  \nc{\dd}{\mathfrak{d} } 
    \nc{\ggo}{\mathfrak{g} }
 \nc{\hh}{\mathfrak{h} }  \nc{\ii}{\mathfrak{i} }
 \nc{\jj}{\mathfrak{j} }  \nc{\kk}{\mathfrak{k} }
\nc{\mm}{\mathfrak{m} }   \nc{\nn}{\mathfrak{n} }
\nc{\pp}{\mathfrak{p} }   
\nc{\rr}{\mathfrak{r} } \nc{\sg}{\mathfrak{s} }
 \nc{\sso}{\mathfrak{so} }  \nc{\spg}{\mathfrak{sp} }
 \nc{\ssu}{\mathfrak{su} }  \nc{\ssl}{\mathfrak{sl} }
 \nc{\tog}{\mathfrak{t} }  \nc{\uu}{\mathfrak{u} }
 \nc{\vv}{\mathfrak{v} } \nc{\ww}{\mathfrak{w} }
 \nc{\zz}{\mathfrak{z} }  
 \nc{\iso}{\mathfrak{iso}}
\nc{\CC}{{\mathbb C}}
 \nc{\DD}{{\mathbb D}}
\nc{\FF}{{\mathbb F}}
\nc{\GG}{{\mathbb G}}  
\nc{\HH}{{\mathbb H}}
\nc{\II}{{\mathbb I}}
\nc{\JJ}{{\mathbb J}}
\nc{\KK}{{\mathbb K}}
\nc{\NN}{{\mathbb N}}
\nc{\RR}{{\mathbb R}}  
 \nc{\ZZ}{{\mathbb Z}}
\nc{\ggob}{\overline{\mathfrak{g}}} 
\nc{\glg}{\mathfrak{gl} }
\nc{\pca}{\mathcal{P}} \nc{\nca}{\mathcal{N}}
 \nc{\vp}{\varphi} \nc{\ddt}{\frac{{\rm d}}{{\rm d}t}}
 \nc{\la}{\langle} \nc{\ra}{\rangle}
 \nc{\brg}{[\,,\,]_{\ggo}}
 \nc{\brv}{[\,,\,]_{\vv}}
 \nc{\SO}{{\sf SO}} \nc{\Spe}{{\sf Sp}} \nc{\Sl}{{\sf Sl}}
 \nc{\SU}{{\sf SU}} \nc{\Or}{{\sf O}} \nc{\U}{{\sf U}}
 \nc{\Gl}{{\sf Gl}} \nc{\Se}{{\sf S}} \nc{\Cl}{{\sf Cl}}
 \nc{\Spin}{{\sf Spin}} \nc{\Pin}{{\sf Pin}}
 \nc{\ad}{\operatorname{ad}} \nc{\Ad}{\operatorname{Ad}}
 \nc{\coad}{\operatorname{coad}} 
 \nc{\rank}{\operatorname{rank}} \nc{\Irr}{\operatorname{Irr}}
 \nc{\End}{\operatorname{End}} \nc{\Aut}{\operatorname{Aut}}
 \nc{\Inn}{\operatorname{Inn}} \nc{\Der}{\operatorname{Der}}
 \nc{\Hh}{\operatorname{H}}
 \nc{\Ker}{\operatorname{Ker}} \nc{\Iso}{\operatorname{Iso}}
 \nc{\Le}{\operatorname{L}} \nc{\tr}{\operatorname{tr}}
 \nc{\dif}{\operatorname{d}} \nc{\sen}{\operatorname{sen}}
 \nc{\modu}{\operatorname{mod}} \nc{\Ric}{\operatorname{Ric}}
 \nc{\Sym}{\operatorname{Sym}} \nc{\sca}{\operatorname{sc}}
 \nc{\scalar}{{\sf s}} \nc{\grad}{\operatorname{grad}}
 \nc{\Rc}{\operatorname{Rc}} 
 \nc{\Lie}{\operatorname{L}} \nc{\ct}{\operatorname{T}}
\newcommand{\deax}{\partial_x}
\newcommand{\deay}{\partial_y}
\newcommand{\deaz}{\partial_z}
\newcommand{\deat}{\partial_t}
\newcommand{\betaidot}{\stackrel{\cdot}{\beta_i}}
\newcommand{\betaidotdot}{\stackrel{\cdot\cdot}{\beta_i}}
\newcommand{\alphajdot}{\stackrel{\cdot}{\alpha_j}}
 \newcommand{\meti}{\left\langle}
 \newcommand{\metd}{\right\rangle}
\newcommand{\lela}{\left \langle}
\newcommand{\rira}{\right \rangle}
\newcommand{\bil}{\lela\,,\,\rira}
\nc{\mr}{{\mathfrak r}}
\nc{\ms}{{\mathfrak s}}
\nc{\mv}{{\mathfrak v}}
\nc{\lra}{\longrightarrow}
\nc{\R}{{\mathbb R}}
\nc{\Z}{{\mathbb Z}}
\newcommand{\mn}{\mathfrak n }
\newcommand{\mz}{\mathfrak z }
\newcommand{\mgg}{\mathfrak g }
 \theoremstyle{plain}
 \newtheorem{thm}{Theorem}[section]
 \newtheorem{pro}[thm]{Proposition}
 \newtheorem{lem}[thm]{Lemma}
 \theoremstyle{definition}
 \theoremstyle{remark}
 \newtheorem{rem}{Remark}
 \newtheorem{exa}[thm]{Example}
\begin{document}
\title[Isometric actions on pseudo-Riemannian nilmanifolds]{Isometric actions on pseudo-Riemannian
  nilmanifolds}

\author{Viviana del Barco}
\email{delbarc@fceia.unr.edu.ar}

\address{Universidad Nacional de Rosario, ECEN-FCEIA, Depto de Matem\'a\-tica, Pellegrini 250, 2000 Rosario, Santa Fe, Argentina.}

\author{Gabriela P. Ovando}
\email{gabriela@fceia.unr.edu.ar}

\address{CONICET -  Universidad Nacional de Rosario, ECEN-FCEIA, Depto de Matem\'a\-tica, Pellegrini 250, 2000 Rosario, Santa Fe, Argentina.}

\date{\today}

\begin{abstract}  This work deals with the structure of the isometry group of 
pseudo-Rieman\-nian  2-step nilmanifolds. We study the action by isometries of several groups and we construct 
examples showing substantial differences with the Riemannian situation; for instance the action 
of the nilradical of the isometry group does not need to be transitive. For a nilpotent Lie group endowed with a 
 left-invariant pseudo-Riemannian metric  we study conditions for which the subgroup of isometries fixing the identity element 
 equals the subgroup of isometric automorphisms. This set equality  holds for pseudo-$H$-type Lie groups.
\end{abstract}

\thanks{{\it (2010) Mathematics Subject Classification}: 53C50, 53C30, 22E25, 53B30.\\
{\it Keywords:} pseudo-Riemannian nilmanifolds, nilpotent Lie groups, isometry groups, bi-invariant metrics.
}

\thanks{ }

\maketitle

\section{Introduction}
A pseudo-Riemannian nilmanifold is a pseudo-Riemannian manifold $M$  which admits a transitive action
 by isometries of a nilpotent Lie group. The interest on 
these manifolds  has been  renovated in the last
 years motivated by their applications not only in mathematics but also in physics 
 (see for instance \cite{Du,Fi} and references therein). A typical question is the possibility of 
the extension to the pseudo-Riemannian case of several
 properties already known in  the Riemannian situation, a topic which  could be quite
 complicated. 
Flat pseudo-Riemannian nilmanifolds were investigated in \cite{DI}; for non-flat
 nilmanifolds there are today many open problems.  

  Let $M$ denote a  Riemannian manifold with isometry group $\Iso(M)$.  Wolf proved
in \cite{Wo}  that {\em if a  connected nilpotent Lie group $N\subseteq \Iso(M)$ acts transitively
 on  $M$ then $N$ is unique, it is the nilradical of the isometry group, and
 the transitive action of $N$ is also simple. Thus $M$ can be identified with the nilpotent Lie
 group $N$ equipped with a left-invariant metric.
 Furthermore the subgroup $H$ of isometries fixing the identity element coincides with the group $H^{aut_N}$ of isometric automorphisms of $N$ and therefore  the isometry group is the semidirect product $\Iso(M)=N\rtimes H$ (*)} (see \cite{Go, Wi}). 

Later Kaplan  \cite{Ka1} studied other  isometric actions on a family of 2-step nilmanifolds, namely on $H$-type Lie groups. 
It was shown that 
the group of isometric automorphisms coincides
 with the group of isometries of $N$ fixing its identity element and the distribution
\begin{equation}\label{dist}
 TN = \mv N \oplus \zz N.
\end{equation}
These subbundles are obtained by translation on the left of the splitting at the Lie algebra level
\begin{equation}\label{split}
 \mn= \mv \oplus \zz,
\end{equation}
 where $\mn$ is the Lie algebra of $N$, $\zz$ denotes its center
and $\mv$ the orthogonal complementary subspace  of $\zz$. 

Our goal here is to investigate  some Lie groups acting by isometries on a fixed pseudo-Riemannian 2-step nilpotent Lie group: the group of isometries preserving the splitting (\ref{dist}), 
the group of isometric automorphisms and the full isometry group. We get several 
 results concerning this topic, improving the results in \cite{CP} and we have examples showing difficulties. 
 As noticed recently by Wolf et al. \cite{WZ} the question of the structure of the nilradical 
of the isometry group for a pseudo-Riemannian nilmanifold is subtle. 

We exhibit a 2-step nilpotent Lie group $N$ equipped with a left-invariant 
Lorentzian metric such that:
\begin{itemize}
 \item the group  of isometric automorphisms is smaller than the subgroup of isometries fixing the identity
 element;
\item the Lie group $N$ is not normal into $\Iso(N)$, hence the algebraic structure (*) does not hold;
\item the action of the nilradical of $\Iso(N)$ is not transitive on $N$.
\end{itemize}

These facts reveal remarkable differences with the Riemannian si\-tuation. For Riemannian 
2-step nilmanifolds it is known that  every isometry is ''compatible'' with the splitting (\ref{split}). Geometrically the subspace $\mv$ correponds to negative eigenvalues of the Ricci operator while the subspace $\mz$ is described by the non-negative eigenvalues. We shall see that a similar characterization cannot be achieved for a metric with signature.

 Let $N$ denote a 2-step nilpotent Lie group equipped with a pseudo-Riemannian  left-invariant metric. If the center is non-degenerate one  gets a decomposition of the Lie algebra $\mn$ as in (\ref{split}). Under these hypothesis
 \begin{enumerate}
 \item[{\it a.}]  the group of isometric automorphisms coincides with the group 
of isometries fixing the identity element and preserving the splitting (\ref{dist});
\item[{\it b.}] we get  conditions to assert the equality 
 $H=H^{aut_N}$ so obtaining the structure (*) for the full isometry group.
\end{enumerate}
 In particular, the family of pseudo-$H$-type Lie groups satisfies the conditions in {\it b.} However 
this does not characterize this family.

 In the case of degenerate center the situation is more singular. We mainly work with 
bi-invariant metrics on 2-step nilpotent Lie groups, showing that {\it a.} above does not hold: there are isometric automorphisms not preser\-ving 
any kind of splitting (\ref{dist}). Even more, we present an example where there is no relationship between those groups. 

\section{Basic facts and notations} \label{sec1}

Let $(G, \bil)$ denote a Lie group equipped with a left-invariant pseudo-Rieman\-nian metric. In this section we recall some definitions and properties of groups  acting by  isometries. 

 Let $\Iso(G)$ denote the (full) isometry group of  $(G, \bil)$. This is a Lie group whenever it is equipped with the compact-open
topology. 
Since translations on the left are isometries, it is easy to verify that every $f\in \Iso(G)$ can be written as a product 
\begin{equation}
f= L_g\circ \varphi \label{comp}\end{equation}
  where $L_g$ denotes the translation by the element $g\in G$ and $\varphi$ is an isometry
 fixing the identity element. The subgroup of left-translations by elements of $G$ is  closed in $\Iso(G)$ and it is isomorphic
 to $G$. The subgroup of isometries fixing  the identity element denoted by $H$ is also a  closed subgroup of $\Iso(G)$ and due to (\ref{comp}) one has 
 $$\Iso(G)=G\cdot H.$$
 Let $\Aut(G)$ denote the group of automorphisms of $G$ and set $\Iso^{aut}(G)=G\cdot H^{aut_G}$ where $H^{aut_G}$ denotes the  group of isometric automorphisms of $G$, that is $H^{aut_G}=\Aut(G)\cap \Iso(G)$. Since for every automorphism $\phi\in \Aut(G)$ it holds $\phi \circ L_x = L_{\phi(x)} \circ \phi$, it follows that the subgroup of translations on the left is a  normal subgroup of the group $\Iso^{aut}(G)$,  thus one gets 
$$\Iso^{aut}(G)= G \rtimes H^{aut_G}.$$

A pseudo-Riemannian manifold is called {\em locally symmetric} if $\nabla R \equiv 0$, where $\nabla$ denotes 
the covariant derivative with respect to the Levi-Civita connection and $R$ denotes the curvature tensor. The Ambrose-Hicks-Cartan theorem (see for example \cite[Thm. 17, Ch. 8]{ON}) states that given a 
complete locally symmetric 
pseudo-Riemannian  manifold $M$, a linear isomorphism
$A : T_p M \to T_p M$ is the differential of some isometry of $M$ that fixes the point $p\in M$ if and 
only if it preserves the symmetric bilinear form that the metric induces into the tangent space and if for 
every $u,v, w \in T_p M$ the following equation holds:
\begin{equation} \label{ACH}
 R(Au, Av)Aw = A R(u, v)w.
\end{equation}


Let $\ggo$ denote the Lie algebra of $G$ which is identified with the Lie algebra of left-invariant vector fields on $G$.
Then for $G$ connected
  the following statements are equivalent (see \cite[Ch. 11]{ON}):

\begin{enumerate}
\item $\la\, ,\,\ra$  is right-invariant, hence bi-invariant;
\item $\la\,,\,\ra$  is $\Ad(G)$-invariant;
\item  the inversion map $g \to g^{-1}$ is an isometry of $G$;
\item $\la [u, v], w\ra + \la v, [u, w] \ra= 0$ for all $u,v,w \in \ggo$;
\item $\nabla_u w = \frac12 [u, w]$ for all $u,w \in \ggo$, where
$\nabla$ denotes the Levi Civita connection;
\item  the geodesics of $G$ starting at the identity element $e$ are the 
one parameter subgroups of $G$.
\end{enumerate}

 By 3. the pair $(G, \bil)$ is a pseudo-Riemannian symmetric space, that is, geodesic symmetries are isometries. 
 Usual computations show that 
   the curvature tensor is
 \begin{equation}
 R(u, w) = - \frac14 \ad([u,w]) \qquad \quad  \mbox{ for } u,w \in
 \ggo.
 \label{curvatura}
 \end{equation}
 

The following lemma is proved by applying the Ambrose-Hicks-Cartan theorem to the Lie group $G$ 
equipped with a bi-invariant metric and whose  curvature formula was given in  
(\ref{curvatura}) (see \cite{Mu}).
 
 \begin{lem} \label{iso} Let $G$ be a simply connected Lie group with a bi-invariant
 pseudo-Riemannian metric. Then a linear isomorphism $A : \ggo \to \ggo$ 
 is the differential of some isometry in $H$ if and only if for all $u,v,w\in \ggo$, the
 linear map $A$ satisfies the following two conditions:

\begin{enumerate}

\item $\la A u, A w \ra = \la u, w\ra $;

\item $ A[[u, v], w] = [[Au, Av], Aw]$.

\end{enumerate}
\end{lem}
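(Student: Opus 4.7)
The plan is to apply the Ambrose-Hicks-Cartan theorem stated just before the lemma to the group $G$ equipped with the bi-invariant metric. Concretely, I would proceed in three short steps.

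First, I would justify that the hypotheses of Ambrose-Hicks-Cartan are met. The list of equivalences recalled from \cite[Ch. 11]{ON} guarantees that $(G,\bil)$ is a pseudo-Riemannian symmetric space (item 3, inversion is an isometry), so in particular locally symmetric. Completeness follows because, by item 6, the geodesics through $e$ are one-parameter subgroups (defined for all $t\in\RR$) and left translations by elements of $G$ are isometries that move $e$ to any given point; thus every geodesic extends to all of $\RR$. Together with simple connectedness, the Ambrose-Hicks-Cartan theorem applies at the point $p=e$, and it is well-known (again using the fact that left translations are isometries) that any isometry $\varphi\in H$ is uniquely determined by its differential $A=\dif\varphi_e:\ggo\to\ggo$.

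Second, I would translate the two conditions in the Ambrose-Hicks-Cartan statement into the ones in the lemma. The preservation of the induced bilinear form on $T_eG\simeq \ggo$ is exactly condition (1). For the curvature condition (\ref{ACH}), I would use the explicit formula (\ref{curvatura}) giving $R(u,v)w = -\tfrac14[[u,v],w]$ for all $u,v,w\in\ggo$. Substituting this into $R(Au,Av)Aw = A\,R(u,v)w$ yields
\begin{equation*}
-\tfrac14 [[Au,Av],Aw] \;=\; -\tfrac14\, A[[u,v],w],
\end{equation*}
which is precisely condition (2). Hence a linear isomorphism $A$ satisfies (1)--(2) if and only if it preserves the metric and the curvature tensor at $e$.

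Third, I would assemble the equivalence. If $\varphi\in H$, then $A=\dif\varphi_e$ satisfies the two Ambrose-Hicks-Cartan conditions, hence (1) and (2) by the translation above. Conversely, given $A$ satisfying (1) and (2), the Ambrose-Hicks-Cartan theorem produces an isometry $\varphi$ of $G$ with $\varphi(e)=e$ and $\dif\varphi_e=A$, i.e. $\varphi\in H$. Since the two steps of the argument are essentially formal after (\ref{curvatura}) is in hand, there is no real obstacle; the only point that requires a line of attention is the completeness of $G$, which I have addressed above via the one-parameter subgroup description of geodesics.
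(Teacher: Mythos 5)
Your proposal is correct and follows exactly the route the paper indicates: apply the Ambrose--Hicks--Cartan theorem to $(G,\bil)$, using that a bi-invariant metric makes $G$ a complete (simply connected) symmetric space, and substitute the curvature formula $R(u,v)w=-\tfrac14[[u,v],w]$ to convert the curvature-preservation condition into condition (2). The paper only sketches this in one sentence (citing M\"uller), and your write-up supplies the same argument with the completeness and local symmetry hypotheses properly checked.
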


Note. A symmetric bilinear form on a Lie algebra $\mgg$ satisfying 4. above is said to be ad-invariant. If it is non-degenerate we just call it a {\em metric}. 
\section{A homogeneous Lorentzian manifold of dimension four} \label{sec3}

In this section we study geometrical features of  a Lorentzian manifold of dimension
four and we show that it admits  a transitive and simple action by isometries of both  a solvable and  a 
nilpotent Lie group. \smallskip

Set $M$  the pseudo-Riemannian manifold $\R^4$ with the following Lorentz\-ian metric 
\begin{equation}\label{metric}
 g= dt (dz + \frac12 y dx - \frac12 x dy) + dx^2 + dy^2
\end{equation}
where $(t,x,y,z)$ are  usual coordinates  for $\RR^4$.
Denote  $v=(x, y)$ and for each  $(t_1, v_1, z_1)\in \RR^4$ consider the following 
differentiable functions of $M$:
\begin{equation}\label{actionN}
 L^N_{(t_1, v_1, z_1)}(t_2, v_2, z_2)=(t_1+t_2, v_1+v_2, z_1+z_2+\frac12 v_1^t J v_2)
\end{equation}
\begin{equation}\label{actionG}
 L^G_{(t_1, v_1, z_1)}(t_2, v_2, z_2)=(t_1+t_2, v_1+R(t_1)v_2, z_1+z_2+\frac12 v_1^t JR(t_1) v_2)
\end{equation}
where $J$ and $R(t)$ are the linear maps on $\RR^2$ given by
\begin{equation}\label{mat}
J=\left( \begin{matrix} 0 & 1 \\ -1 & 0 \end{matrix} \right), \qquad \qquad
R(t)=\left( \begin{matrix} \cos t & -\sin t \\ \sin t & \cos t \end{matrix} \right) \qquad t\in \RR.
\end{equation}
Both maps $L^N_{(t_1, v_1, z_1)}$ and $L^G_{(t_1, v_1, z_1)}$ are isometries of $(M, g)$: in 
fact on the basis $\{\deat, \deax, \deay, \deaz\}$ of $T \RR^4$ one has the following differentials
$$ L^N_{(t_1, x_1,y_1, z_1) *} = \left( \begin{matrix}
1 & 0 & 0 & 0 \\
0 & 1 & 0 & 0 \\
0 & 0 & 1 & 0\\
0 & -\frac12 y_1 & \frac12 x_1 & 1                    
                                   \end{matrix} \right)$$
$$ L^G_{(t_1, x_1, y_1, z_1) *} = \left( \begin{matrix}
1 & 0 & 0 & 0 \\
0 & \cos t_1 & -\sin t_1 & 0 \\
0 & \sin t_1 & \cos t_1 & 0\\
0 & \mu & \nu & 1                    
                                   \end{matrix} \right) \mbox{ with } 
\begin{array}l
\mu= \frac12( x_1 \sin t_1 - y_1 \cos t_1), \\ \\
\nu = \frac12(x_1 \cos t_1+ y_1 \sin t_1).
\end{array}$$
 Thus
 the maps above  give isometric left-actions  of a solvable Lie group $G$ and a nilpotent
 Lie group  $N$ on $(M,g)$.  

Both, the Lie group $G$ and $N$ are modelled on $\RR^4$ with its
canonical differentiable structure and with the multiplication map where the translations 
on the left are induced by the maps 
$L^G$ on $G$ and $L^N$ on $N$. 

It is not hard to see that the actions of both groups $G$ and $N$  on 
$M$ are simple and transitive so that the homogeneous Lorentzian manifold $M$ can be
 represented as $(G, g_G)$ and $(N, g_N)$ where $g_G$ and $g_N$ are both given by the same formula 
(\ref{metric}). Canonical computations show that the metric $g_N$ is left-invariant on $N$ and the
 metric $g_G$ is
bi-invariant on $G$, that is,
translations on the left and on the right on $G$ are isometries for $g_G$ (see the previous section).

\begin{rem} The exponential map from $\ggo$ to  $G$ is not surjective as one verifies with the formula given in Section 4.2 \cite{BOV1}, a fact previously proved in \cite{St}.  Thus $G$ is a solvable Lie group which belongs to the class of examples named in Example 3.4 \cite{WZ} for which there exists a pair of points that cannot be joined by an unbroken geodesic. 

The Lie group $G$  known as the oscillator group \cite{St} underlies the
 Nappi-Witten space \cite{NW}. Since the Lorentzian metric $g_G$ is bi-invariant on $G$, the homogeneous manifold $M$ is symmetric.
\end{rem}
\subsection{The solvmanifold model}
Making use of the model $(G, g_G)$ one obtains the isometry group of $(M,g)$. Actually as an application of   
Lemma \ref{iso}, the group $H$ of isometries of $G$ - and hence $M$ - fixing the element $e=(0,0,0)$ (the identity element of
 $G$) has a differential at $e$ with matrix of the form
$$\left( \begin{matrix}
 \varepsilon & 0 & 0 \\
  Jv & A & 0 \\
  -\frac12 ||v||^2 & -(Jv)^t A & \varepsilon
  \end{matrix}
  \right) \qquad \mbox{ with }\quad \varepsilon=\pm 1,\,A\in\Or(2),\;v\in\R^2$$
where $\Or(2)$ denotes the orthogonal group of $\RR^2$. Thus 
$$H\simeq (\{1,-1\}\times \Or(2))\ltimes \RR^2$$
 (see \cite{BOV1} for more details).  Notice that $H$ has four connected components and the connected component of the identity coincides with the group of inner automorphisms of $G$ $$H_0=\{\chi_g:G\lra G,\; \chi_g(x)=gxg^{-1}:\;g\in G\}\simeq \SO(2)\ltimes \R^2.$$ Explicitly for  $(t,v,z)\in\R^4$ and  $g=(t_0, v_0, z_0)$
\begin{eqnarray}\label{eq:ig}
\chi_g(t,v,z) & = & (t,v_0+R(t_0)v-R(t)v_0,\\
& & \hspace{1cm}z+\frac12 v_0^tJR(t_0)v-\frac12 v_0^tJR(t)v_0-\frac12(R(t_0)v)^tJR(t)v_0). \nonumber
\end{eqnarray}

The following diffeomorphisms
\begin{eqnarray}\label{eq:is}
\psi_1(t,v,z)&=&(-t,Sv,-z),\ \ \text{ where } S(x,y)=(-x,y)\nonumber
\\ 
\psi_2(t,v,z)&=&(-t,R(-t)v,-z),\ \ 
\label{eq:fi}\\ 
\psi_3(t,v,z)&=&\psi_1\circ \psi_2(t,v,z)=(t,SR(-t)v,z), \nonumber
\end{eqnarray}
constitute isometries of $M$ fixing the element $e$ and they belong to different 
connected components of $H$. The other three connected components of $H$ are 
$$ H_0\cdot \psi_1,\qquad H_0\cdot \psi_2 \qquad\text{and } \quad H_0\cdot \psi_3.$$
 Besides, the group $H^{aut_G}$ of isometric automorphisms of $G$ is not connected since it corresponds to
 the connected components  $H_0$ and $H_0\cdot \psi_1$, where $\psi_1$ is as in (\ref{eq:is}). Notice that $\chi_{(-\pi,0,0)}\circ\psi_2 $ is the group inversion of $G$.
 
 \begin{rem} \label{normal} Since $\psi_2 \circ L^G_{(t_1, v_1, z_1)} \circ \psi_2^{-1}$ is not a 
translation on the left, the subgroup of left-translations $G$ is not normal in $\Iso(M)$. \end{rem}
\smallskip

Note that  $G$ is normal into $\Iso_0(M)$, the connected component of the identity of $\Iso(M)$, hence
$$\Iso_0(M)=  G\rtimes H_0.$$

The Lie algebra of $\Iso(G)$ denoted by $\iso$ corresponds to the vector space spanned by 
$\{f_0, f_1, f_2, e_0, e_1, e_2, e_3\}$ obeying the non-trivial Lie bracket relations:
$$
\begin{array}{ccc}
{\begin{array}{rcl}
{[f_0, f_1]} & = & f_2 \\ {[f_0, e_2]}  & = & - e_1 \\ {[e_0, e_1]} & = & e_2 \end{array}}
& { \begin{array}{rcl}
{[f_0, f_2]} & = & -f_1 \\  {[f_1, e_2]} & = & e_3  \\  {[e_0, e_2]} & = & -e_1 \end{array}}
& {\begin{array}{rcl} 
{[f_0, e_1]} & = &  e_2 \\
 {[f_2, e_1]} & = & - e_3 \\
  {[e_1, e_2]}  & = & e_3.
\end{array}}
  \end{array}
$$

\subsection{The nilmanifold model} We study the structure of the isometry group of the nilmanifold $M$ with respect to the nilpotent Lie group $N$. 

The nilpotent Lie group $N$ corresponds to
 the Lie group $\RR \times \Hh_3$ 
where $\Hh_3$ denotes the Heisenberg Lie group of dimension three. Notice that for any $(t_1,v_1,z_1)\in\R^4$ it holds
\begin{equation}L^G_{(t_1, v_1, z_1)}=L^N_{(t_1, v_1, z_1)} \circ \chi_{(t_1,0,0)}=\chi_{(t_1,0,0)} 
\circ L^N_{(t_1, R(-t_1) v_1, z_1)},\label{lgln}\end{equation}
so that the Lie algebra 
$\mn$ viewed into $\iso$ is spanned by the vectors
\linebreak $f_0-e_0, e_1, e_2, e_3$ obeying the non-zero  Lie bracket relation $$[e_1, e_2]=e_3.$$

Since $\chi_{(t_1,0,0)}\in H^{aut_N} \subseteq H$ we have 
$$\Iso(M)=  N\cdot H \quad \mbox{with }H\simeq(\{1,-1\} \times \Or(2))\ltimes \RR^2,$$
but {\em $N$ is not a normal subgroup of $\Iso_0(M)$}.

For the left-invariant metric given in (\ref{metric})
a skew-symmetric derivation $D$ of $(\nn, \meti \, ,\, \metd)$ must preserve both subspaces $\zz$ and $\mv$ and 
following canonical computations one gets the non-trivial equalities
$$De_1=\eta e_2 \qquad D e_2= -\eta e_1, \quad \eta \in \RR.$$
So the connected component of the identity in the subgroup of isometric automorphisms of $N$ is given by
$$H^{aut_N}_0=\{\chi_{(s,0,0)}: s\in\R\}. $$
Recall from (\ref{eq:ig}) that $ \chi_{(s,0,0)}(t,v,z)=(t, R(s)v, z)$  with $ R(s)$ as in (\ref{mat}).
Furthermore $\Iso^{aut}(N)$ is not connected. In fact   the map $\psi_1$ defined in (\ref{eq:is}) 
is also an isometric automorphism of $N$. 

We already proved that   $$\Iso^{aut}(N) \subsetneq\Iso(M).$$ Compare with \cite{CP}.

The nilradical of $\iso$ is the ideal of dimension five spanned by $\{f_1, f_2,$ $e_1, e_2, e_3\}$
 which does not
contain the subalgebra $\nn$. In fact, $\mn$ is not contained in the commutator $[\iso, \iso]$.
 In terms of the isometry functions the maximal connected normal 
nilpotent subgroup of $\Iso_0(M)$ corresponds to
$$ \tilde{N}=\{ L^G_{(0,w, z)} \circ \chi_{(0,v,0)}: \quad v,w, \in \RR^2, z \in \RR \}$$
and usual computations show that the orbit of a  point $(t_0,v_0,z_0)$ is given by
$$\mathcal O(t_0,v_0,z_0)=\{(t,v,z)\in M  : \quad  t=t_0\}.$$
This proves that the action of $\tilde{N}$ {\em is not transitive on the nilmanifold} $M$.

\medskip

\noindent We summarize the results for $(M, g)$.
\begin{itemize}
 \item $N$ acts transitively on $M$ but it is not a normal subgroup of $\Iso_0(M)$.
\item The maximal connected normal nilpotent Lie subgroup of $\Iso(M)$, namely the nilradical, 
 does not contain $N$ and its action on $M$ is not transitive.
\item $\Iso^{aut}(N)\subsetneq \Iso(M)$, where  $\Iso^{aut}(N)=N\rtimes H^{aut_N}$, $\Iso(M)= N \cdot H$.
\item $H^{aut_N}$ is not connected.

\end{itemize}

Compare this with Section 4.2 in \cite{Wo}, and  results in  \cite{CP}.

\section{Isometric actions for non-degenerate center}
This section concerns the study of isometric actions. Let $(N, \bil)$ denote a simply connected 
2-step nilpotent Lie group equipped with a left-invariant
pseudo-Riemannian metric and let $\bil$ denote the metric on its corresponding Lie
algebra $\mn$. 

Let $\mz$  be the center of $\mn$ and assume the restriction of $\bil$ to $\mz$ is non-degenerate. Hence  there exists an orthogonal decomposition of $\mn$
\begin{equation}
\label{ortdec} \mn=\mv\oplus \mz
\end{equation}
so that  $\mv$ is also non-degenerate. This induces on the Lie group $N$ left-invariant
orthogonal distributions $\mv N$ and $\mz N$ such that $TN=\mv  N\oplus\mz  N$. 

Denote by $\Iso^{spl}(N)$ the group of isometries of $N$ that preserve the
splitting $TN=\mv N \oplus \mz N$ \cite{CP,Ka1}. Notice that translations on the left by elements of the group $N$  preserve
this splitting. Thus
 $$\Iso^{spl}(N) = N\cdot H^{spl}$$ where $H^{spl}$ is the subgroup of isometries
which preserve the splitting and fix the identity element of $N$.  When the metric is positive
definite, one has  \cite{Eb,Ka1}:
\begin{equation} \label{isomgroup}
 \Iso(N)=\Iso^{aut}(N)=\Iso^{spl}(N).
\end{equation}

The purpose here is to analyze the group equalities above in the pseudo-Riemannian case 
 and occasionally to state new relationships between these three groups. 

Since the pseudo-Riemannian metric on $N$ is invariant by left-translations we study the geometry
 of $N$  as effect from $(\nn, \bil)$. 

Given $u\in\mn$, denote by $\ad_u^*$ the adjoint transformation of $\ad_u$ with
respect to $\bil$. One verifies that when $u\in \mv$ and $w\in \mz$ it holds $\ad_u^*w\in\mv,$
 while $\ad_u^*w=0$ if $u\in\mz$ or $u,w\in\mv$. Furthermore each $w\in\mz$  defines a linear transformation
$j(w):\mv\lra\mv$ by
$$j(w)u=\ad_u^*w \mbox{ for all } u\in\mv,$$ 
so that 
\begin{equation}\label{eq:jota}
\lela j(w)u,u'\rira =\lela w,[u,u']\rira \quad \mbox{ for all } u,u'\in\mv.
\end{equation}
Thus for $w\in \zz$, the map $j(w)$ belongs to $\sso(\mv)$,   the Lie algebra of skew-symmetric maps of $\mv$ with respect to $\bil$ and  one gets that $j: \mz \to \sso(\mv)$ is a linear homomorphism. As in the Riemannian case, the maps $j(w)$
capture important 
geometric information of the pseudo-Riemannian space $(N,\bil)$. 

The  covariant derivative $\nabla$ 
relative to the Levi Civita connection of ($N$, $\bil$) evaluated on left-invariant
vector fields is
$$2\,\nabla_uw=[u,w]-\ad_u^*w-\ad^*_wu,\quad u,w\in\mn, $$ 
which together with   the formula for $j:\mz\lra \mathfrak{so}(\mv)$ in (\ref{eq:jota}) gives
\begin{equation}\label{eq:nabla}\left\{
\begin{array}{ll}
\nabla_u w=\frac12 \,[u,w] & \mbox{ if } u,w\in\mv,\\
\nabla_u w=\nabla_wu=-\frac12 j(w)u & \mbox{ if } u\in\mv,\,w\in\mz,\\
\nabla_u u'=0& \mbox{ if } u, u'\in\mz.
\end{array}\right.
\end{equation}

Since for simply connected nilpotent Lie groups the exponential map $\exp:\mn\lra N$ is a diffeomorphism,  it is possible to
define smooth maps $b:N\lra \mv$ and $a:N\lra \mz$ such that for a given $n\in N$ one writes
\begin{equation}
n=\exp(b(n)+a(n))\label{exp}.\end{equation} 
Let 
$\{b_1,\ldots,b_m\}$ be a basis of $\mv$ and $\{a_1,\ldots,a_p\}$ be a basis of $\mz$, then 
there are defined
maps $\{\beta_i,\alpha_j:N\lra \R:\,i=1,\ldots,m,\,j=1,\ldots,p\}$ for which
$$b(n)=\sum_{i=1}^m\beta_i(n)b_i, \qquad a(n)=\sum_{j=1}^p \alpha_j(n)a_j . $$
Thus $\varphi=(\beta_1,\ldots,\beta_m,\alpha_1,\ldots,\alpha_p)$ is a global coordinate system 
for $N$ where at $n\in N$ it holds
\begin{equation}\label{vectores}
\begin{array}{rcl}
\frac{\partial}{\partial \beta_i}_{|_n}&=&L_{n_*}{|_e}(b_i+\frac12
\sum_{k=1}^m\,[b_i,\beta_k(n)b_k]),\\
\frac{\partial}{\partial \alpha_j}_{|_n}&=&L_{n_*}{|_e}(a_j).
\end{array}
\end{equation}
To verify these equalities see formulas for the exponential map in \cite{Eb}.

Let $\gamma:I\lra N$ be a curve on $N$ and write $b$ and $a$ for the vector
valued maps 
$\gamma(t)=\exp(b(t)+a(t))$. Making use of  the equalities in (\ref{vectores}) one gets  
\begin{eqnarray}
\frac{d}{dt}\gamma(t)&=&\sum_{i=1}^m \betaidot \frac{\partial}{\partial
\beta_i}_{|{\gamma(t)}} +\sum_{i=j}^p \alphajdot \frac{\partial}{\partial
\alpha_j}_{|{\gamma(t)}}\nonumber\\
&=&L_{\gamma(t)_*}{|_e}\left( \sum_{i=1}^m \betaidot x_i+ \sum_{j=1}^p
(\sum_{k=1}^m\frac12\, c^j_{ik}\betaidot v_k+\alphajdot)z_j\right)\nonumber\\
&=&L_{\gamma(t)_*}{|_e}\left(\stackrel{\cdot}{b}
+\stackrel{\cdot}{a}+\frac12\, [\stackrel{\cdot}{b},b]\right),\nonumber
\end{eqnarray}
where $c_{ik}^j$ denote the structure constants for the Lie algebra. Notice that $L_{\gamma(t)_*}{|_e}(\stackrel{\cdot}{b})$ and
$L_{\gamma(t)_*}{|_e}(\stackrel{\cdot}{a}+\frac12\, [\stackrel{\cdot}{b},b])$ are
the components of $d\gamma/dt$ in the bundles $\mv N $ and $ \mz N$ respectively.


So the covariant derivative of $\stackrel{\cdot}{\gamma}$ along $\gamma$ is given by 
\begin{eqnarray}\nonumber
\nabla_{d\gamma/dt}d\gamma/dt&=&L_{\gamma(t)_*}{|_e}\left(
\nabla_{\stackrel{\cdot}{b} + \stackrel{\cdot}{a}+\frac12
[\stackrel{\cdot}{b},b] }\stackrel{\cdot}{b} +(\stackrel{\cdot}{a}+\frac12
[\stackrel{\cdot}{b},b])\right).
\end{eqnarray}
Denote by $\sigma=\,\stackrel{\cdot}{b} +\stackrel{\cdot}{a}+\frac12
[\stackrel{\cdot}{b},b]$ the curve in $\mn$. Then
\begin{equation}\nabla_{\sigma(t)}\stackrel{\cdot}{b}\,=
\nabla_{\sigma(t)}(\sum_{i=1}^m\betaidot b_i)=\sum_{i=1}^m\betaidotdot
b_i+\sum_{i=1}^m\betaidot \nabla_{\sigma(t)}b_i\nonumber
\end{equation} 
which after (\ref{eq:nabla}) equals
\begin{equation}
\nabla_{\sigma(t)}\stackrel{\cdot}{b}\,=\,\stackrel{\cdot\cdot}{b}+\sum_{i=1}^m\betaidot
\left( \frac12 [\stackrel{\cdot}{b},b_i]-\frac12
j(\stackrel{\cdot}{a}+\frac12[\stackrel{\cdot}{b},b])b_i
\right)\,=\,\stackrel{\cdot\cdot}{b}-\frac12
j(\stackrel{\cdot}{a}+\frac12[\stackrel{\cdot}{b},b])\stackrel{\cdot}{b}.\label{nablaxpunto}\end{equation}

Similar computations give 
\begin{equation}
\nabla_{\sigma(t)}\left(\stackrel{\cdot}{a}+\frac12
[\stackrel{\cdot}{b},b]\right)\,=\,
\,\stackrel{\cdot\cdot}{a}+\frac12 [\stackrel{\cdot\cdot}{b},b]-\frac12
j(\stackrel{\cdot}{a}+\frac12[\stackrel{\cdot}{b},b])\stackrel{\cdot}{b}.\nonumber
\end{equation}
Therefore a curve $\gamma:I \to N$ is a geodesic if and only if the curve  $\sigma:I \to \mn$ satisfies 
$\nabla_{\sigma(t)}\sigma(t)=0$, that is,  the following system of equations is satisfied
\begin{equation}\left\{\begin{array}{l}
\stackrel{\cdot\cdot}{b}-j(\stackrel{\cdot}{a}_0
){\stackrel{\cdot}{b}}=0\vspace{0.1cm}\\
\stackrel{\cdot}{a}+\frac12 [\stackrel{\cdot}{b},b]=\stackrel{\cdot}{a}_0
\end{array}\right.\quad \,\mbox{ where } \stackrel{\cdot}{a}_0 \mbox{ denotes
}\stackrel{\cdot}{a}(0). \label{geodesic}
\end{equation}
 
Let $f$ be an automorphism of $N$. Its differential at the identity $e$ satisfies
 $f_*(\mz) \subseteq \mz$ and if moreover  $f$ is an isometry its differential  also preserves the orthogonal complement $\mz^\bot=\mv$: 
$f_*( \mv )\subseteq \mv$. In view of $f \circ L_n = L_{f(n)} \circ f$ for every $n\in N$,  $f$ preserves the invariant distributions $\mz N$ and $\mv N$ whenever it is an isometric automorphism. Therefore
\begin{equation}\label{autis}
\Iso^{aut}(N) \subseteq \Iso^{spl}(N).
\end{equation}

\begin{pro} \label{autspl}
Let $N$ be a simply-connected 2-step nilpotent Lie group with a left-invariant pseudo-Riemannian
 metric such that  its center  is non-degenerate. Then
$$\Iso^{spl}(N)=
\Iso^{aut}(N).$$
\end{pro}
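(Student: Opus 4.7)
The plan is to prove the reverse inclusion $\Iso^{spl}(N)\subseteq\Iso^{aut}(N)$, since (\ref{autis}) gives the other inclusion. As both groups contain the subgroup of left translations, it suffices to show that every isometry $f\in H^{spl}$ is an automorphism of $N$. Set $A:=f_{*,e}:\mn\to\mn$. Preservation of the splitting forces $A(\mv)\subseteq\mv$ and $A(\mz)\subseteq\mz$, so $A=A_1\oplus A_2$ with $A_1$ (resp.\ $A_2$) a linear isometry of $(\mv,\bil)$ (resp.\ $(\mz,\bil)$); here non-degeneracy of the center is used to make these restrictions well defined.

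Since $N$ is simply connected nilpotent, it will suffice to show that $A$ is a Lie algebra automorphism of $\mn$. Once this is known, the Lie group automorphism $F:N\to N$ integrating $A$ is itself an isometry (by left-invariance of the metric together with $A\in \Or(\mn,\bil)$); then $F(e)=f(e)$ and $F_{*,e}=f_{*,e}$, so the standard uniqueness of isometries on a connected pseudo-Riemannian manifold yields $F=f$. Because $[\mn,\mn]\subseteq\mz$ and $A$ preserves the splitting, the only non-trivial bracket condition reduces to $A_2[u,u']=[A_1u,A_1u']$ for $u,u'\in\mv$, which via (\ref{eq:jota}) is equivalent to the intertwining identity
\[
A_1\,j(w)\;=\;j(A_2w)\,A_1\qquad\text{for all }w\in\mz.
\]

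To establish the intertwining, fix $u\in\mv$ and $w\in\mz$ and let $\gamma$ be the geodesic of $N$ with $\gamma(0)=e$ and $\dot\gamma(0)=u+w$. By (\ref{geodesic}), $\gamma(t)=\exp(b(t)+a(t))$ with $\dot b(t)=e^{tj(w)}u$ and $\dot a(t)+\tfrac12[\dot b(t),b(t)]=w$. Since $f$ is an isometry fixing $e$, $f\circ\gamma$ is the geodesic with initial velocity $A(u+w)=A_1u+A_2w$, so $f\circ\gamma(t)=\exp(b'(t)+a'(t))$ with $\dot b'(t)=e^{tj(A_2w)}A_1u$. Decompose $\dot\gamma=V+Z$ along $\mv N\oplus\mz N$, where
\[
V(t)=L_{\gamma(t)*}\dot b(t)\in\mv N,\qquad Z(t)=L_{\gamma(t)*}w=\tilde w_{\gamma(t)}\in\mz N,
\]
and define $V',Z'$ analogously along $f\circ\gamma$. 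The hypothesis $f\in H^{spl}$ and uniqueness of the orthogonal splitting of $f_*\dot\gamma=\dot{(f\circ\gamma)}=V'+Z'$ force $f_*V=V'$ and $f_*Z=Z'$ as vector fields along $f\circ\gamma$. Since $f$ is an isometry, $f_*(\nabla_{\dot\gamma}V)=\nabla_{\dot{(f\circ\gamma)}}V'$. Using $\nabla_{\dot\gamma}\dot\gamma=0$ gives $\nabla_{\dot\gamma}V=-\nabla_{\dot\gamma}Z=-\nabla_{\dot\gamma}\tilde w$, and (\ref{eq:nabla}) at $t=0$ with $\dot\gamma(0)=u+w$ yields
\[
\nabla_{\dot\gamma}V\big|_{t=0}=-\nabla_{u+w}\tilde w=\tfrac12\,j(w)u,
\]
while the identical computation along $f\circ\gamma$ produces $\nabla_{\dot{(f\circ\gamma)}}V'|_{t=0}=\tfrac12\,j(A_2w)A_1u$. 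Because $j(w)u\in\mv$, applying $A$ to the left-hand side gives $\tfrac12 A_1 j(w)u$, so equating the two expressions yields the desired identity $A_1 j(w)u=j(A_2w)A_1u$ for all $u,w$.

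I expect the only delicate step to be the equality $f_*V=V'$ of vector fields along the whole of $f\circ\gamma$ (at $t=0$ it is trivially $A_1u=A_1u$): this is precisely where the splitting hypothesis is indispensable, and where the non-degeneracy of the center is crucial so that the orthogonal decomposition of each tangent space is well defined. Once this equality is secured, the covariant-derivative computation is routine and the reduction at the start of the argument converts the intertwining into the proposition.
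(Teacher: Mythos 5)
Your proposal is correct and follows essentially the same route as the paper: both establish the intertwining identity $f_*\circ j(w)=j(f_*w)\circ f_*$ by comparing the covariant derivative of the $\mv N$-component of the velocity of a geodesic through $e$ with that of its image geodesic under $f$, and then recover the bracket condition $f_*[u,u']=[f_*u,f_*u']$ from non-degeneracy of the center via (\ref{eq:jota}). Your write-up is merely more explicit about two points the paper leaves implicit --- why the splitting hypothesis forces the $\mv N$/$\mz N$-components of the two velocity fields to correspond under $f_*$, and why a Lie algebra automorphism that is orthogonal integrates back to the isometry $f$ itself --- but this is the same argument, not a different one.
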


\begin{proof}
In view of (\ref{autis}) we should prove that every isometry  $f\in H^{spl}$ is an automorphism of $N$. 
The  proof follows from the next equality for $f$, we shall prove:
$$f_*(j(u)w)=j(f_*u)f_*w \qquad \mbox{ for all }w\in \mv, u\in
\mz. $$
 Let $\gamma=\exp(b(t)+a(t))$ denote  the geodesic throughout $e$ such that \linebreak 
$\stackrel{\cdot}{\gamma}(0)=w+u$, that is $\stackrel{\cdot}{b}(0)=w$ and
$\stackrel{\cdot}{a}(0)=u$. 
Since $f_*$ is an isometry preserving the splitting the geodesic
$\tilde{\gamma}=f\circ \gamma$ can be written as $\tilde{\gamma}(t)=\exp
(f_*b(t)+f_*a(t))$. The Equations (\ref{nablaxpunto}) and  (\ref{geodesic}) at $t=0$   for both
geodesics $\gamma$ and $\tilde{\gamma}$ give
\begin{eqnarray}
j(f_*u)f_*w&=&\left( j(f_*\stackrel{\cdot}{a}_0)
f_*\stackrel{\cdot}{b}\right)_{t=0}\nonumber
\\
&=&2\left(\nabla_{f_*(\stackrel{\cdot}{b} +(\stackrel{\cdot}{a}+\frac12
[\stackrel{\cdot}{b},b]))}f_*\stackrel{\cdot}{b}\right)_{t=0}\nonumber\\
&=&2\,f_*\left(\nabla_{\stackrel{\cdot}{b} +(\stackrel{\cdot}{a}+\frac12
[\stackrel{\cdot}{b},b])}\stackrel{\cdot}{b}\right)_{t=0}\nonumber\\
&=&f_*(j(u)w), \nonumber
\end{eqnarray}
as intended to prove. 

Now consider $w_1,w_2\in\mv$. For any $u\in\mz$ it holds
\begin{eqnarray}
\lela f_*[w_1,w_2],u\rira &=&\lela [w_1,w_2],f^{-1}_*u\rira\;=\;\lela
w_2\;,\;j(f^{-1}_*u)w_1\rira\;=\nonumber\\
&=&\lela w_2\;,\;j(f^{-1}_*u)(f^{-1}_*(f_*w_1))\rira\;=\;\lela
w_2\;,\;f^{-1}_*(j(u)f_*w_1)\rira\;=\nonumber\\
&=&\lela f_*w_2\;,\;j(u)f_*w_1\rira\;=\;\lela [f_*w_1,f_*w_2]\,,\,u\rira ,\nonumber
\end{eqnarray}
which together with the fact that $\mz$ is non-degenerate implies that $f_*$ is a
Lie algebra automorphism. Consequently $f\in H^{aut_N}$.
\end{proof}

The proof above extends to the pseudo-Riemannian setting that one performed by Kaplan  in \cite{Ka1}. 
 Below we investigate geometrical properties of pseudo-Riemannian 2-step nilpotent Lie groups
to get conditions to assert that 
 the group of isometries of $N$ preserving the splitting coincides
 with the full isometry group of $N$.


The Ricci tensor of $(N, \bil)$ can be seen at the Lie algebra level as  the bilinear form on $\mn$ defined throughout the curvature tensor $R$ by  $$\Ric(u,w)=\operatorname{trace}(\xi\lra
R(\xi,u)w)\quad\mbox{ for }\quad u,w\in \mn.$$
 Since $\Ric$ is a symmetric form on $\mn$ there exists a linear operator
$\Rc:\mn\lra \mn$ such that 
 $$\lela \Rc u,w\rira=\Ric(u,w)$$
  which is called the {\em Ricci operator}. The {\em scalar curvature} $s$ of $N$  is the
 trace of the Ricci operator $\Rc$.

In the pseudo-Riemannian case the formulas for the Ricci operator are slightly
different from those in the Riemannian case (see \cite{Eb}). Recall that a basis
$\{w_1,\ldots,w_n\}$ of $\mn$ is said to be {\em orthonormal} if $\lela w_i,w_j\rira
=\pm \delta_{ij}$. The proof of the next proposition follows from usual computations and it can be seen in \cite{Ov}.
\begin{pro}\label{pro1} Let $(N, \bil)$ denote a 2-step nilpotent Lie group equipped
with a left-invariant pseudo-Riemannian metric such that the center is
non-degenerate.
\begin{enumerate}
\item The Ricci operator leaves $\mv$ and $\mz$ invariant.
\item If $\{a_1,\ldots,a_p\}$ is an orthonormal basis of $\mz$ then
\begin{equation}\label{eq:tv}
\Rc|_{\mv}=\frac12 \sum_{i=1}^p\varepsilon_k j(a_k)^2 \qquad \mbox{ with }
\varepsilon_k=\lela a_k,a_k\rira.
\end{equation}
\item $\Ric(u,u')=-\frac{1}{4} \operatorname{trace}(j(u)j(u'))$, for all $u, u'\in \mz$.
\end{enumerate}
\end{pro}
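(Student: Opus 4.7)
The plan is to derive case-by-case formulas for the curvature $R$ on left-invariant fields from the Levi-Civita data in (\ref{eq:nabla}), then trace with a basis adapted to $\mn=\mv\oplus\mz$. Fix orthonormal bases $\{b_1,\ldots,b_m\}$ of $\mv$ with $\langle b_i,b_i\rangle=\eta_i$ and $\{a_1,\ldots,a_p\}$ of $\mz$ with $\langle a_k,a_k\rangle=\varepsilon_k$; then
$$\Ric(X,Y)=\sum_i \eta_i\langle R(b_i,X)Y,b_i\rangle+\sum_k \varepsilon_k\langle R(a_k,X)Y,a_k\rangle,$$
and $R(X,Y)Z=\nabla_X\nabla_Y Z-\nabla_Y\nabla_X Z-\nabla_{[X,Y]}Z$ is expanded using (\ref{eq:nabla}) for each combination $X,Y,Z\in\mv\cup\mz$. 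Since $[\mn,\mn]\subseteq\mz$, $\nabla_\mz\mz=0$ and $\nabla$ couples $\mv$ with $\mz$ only through the skew operators $j(a_k)$, most terms reduce to expressions of the form $j(z_1)j(z_2)v$ or $[b_i,j(z)v]$.

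For part (1), take $X=v\in\mv$, $Y=z\in\mz$. A short expansion gives $R(b_i,v)z\in\mz$ and $R(a_k,v)z\in\mv$, so each contribution to $\Ric(v,z)$ is a pairing between vectors in complementary subspaces of the splitting and therefore vanishes. Since $\Ric$ is symmetric and $\Ric(\mv,\mz)=0$, the operator $\Rc$ leaves both $\mv$ and $\mz$ invariant.

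For part (2), with $X=v,Y=v'\in\mv$, the sum over $\mz$ reduces via $\nabla_{a_k}v'=-\tfrac12 j(a_k)v'$ and skew-symmetry of $j(a_k)$ to $-\tfrac14\sum_k\varepsilon_k\langle j(a_k)^2 v,v'\rangle$. The sum over $\mv$ expands in terms of the $\mv$-brackets and, after invoking the identity
$$\sum_i\eta_i\langle [b_i,v],[b_i,v']\rangle=-\sum_k\varepsilon_k\langle j(a_k)^2 v,v'\rangle,$$
which follows from decomposing each bracket along $\{a_k\}$ and using $\langle[b_i,v],a_k\rangle=\langle j(a_k)b_i,v\rangle$, contributes $\tfrac34\sum_k\varepsilon_k\langle j(a_k)^2 v,v'\rangle$. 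The two pieces add to $\tfrac12\sum_k\varepsilon_k\langle j(a_k)^2 v,v'\rangle$, giving the stated formula for $\Rc|_\mv$. For part (3), with $X=u,Y=u'\in\mz$, the sum over $\mz$ vanishes because $[\mz,\mn]=0$ and $\nabla_\mz\mz=0$; the sum over $\mv$ reduces to $\nabla_u\nabla_{b_i}u'=\tfrac14 j(u)j(u')b_i$, whence $\Ric(u,u')=-\tfrac14\sum_i\eta_i\langle j(u)j(u')b_i,b_i\rangle=-\tfrac14\tr(j(u)j(u'))$, using that the trace of an endomorphism of $\mv$ in a pseudo-orthonormal basis is $\sum_i\eta_i\langle T b_i,b_i\rangle$.

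The main obstacle is the sign bookkeeping: each trace carries the signature factors $\eta_i$ and $\varepsilon_k$, and the coefficients $\tfrac14,\tfrac12,\tfrac34$ arising from the $\tfrac12$-factors of (\ref{eq:nabla}) must be tracked carefully. The delicate point in part (2) is that neither partial sum alone reproduces the stated coefficient $\tfrac12$; the correct answer emerges only after the $b_i$- and $a_k$-contributions are combined and the identity converting $\mv$-brackets to $j$-traces is applied with the correct signs.
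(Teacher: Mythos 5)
Your computation is correct: the curvature terms, the signature factors $\eta_i,\varepsilon_k$ in the traces, the coefficients $-\tfrac14$ and $+\tfrac34$ whose sum gives $\tfrac12$ in part (2), and the identity $\sum_i\eta_i\langle[b_i,v],[b_i,v']\rangle=-\sum_k\varepsilon_k\langle j(a_k)^2v,v'\rangle$ all check out. The paper itself omits this argument, describing it as ``usual computations'' and deferring to \cite{Ov}; what you have written is precisely that standard computation (the pseudo-Riemannian version of Eberlein's), so nothing further is needed.
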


We proceed with the study of the eigenvalues of the Ricci operator $\Rc$. 
  Recall that if $U$ is a real vector space  its complexification is the  vector space
 $$U^\CC=U\otimes_\RR \CC$$  and such that $\dim_\RR U=\dim_\CC U^\CC$. 
A real linear transformation $T$ of $U$ defines a $\CC$-linear operator on $U^\CC$ as 
$T(u\otimes z)=T(u)\otimes z$ for all $u\in U$. In addition, if $U=U_1\oplus U_2$ 
then $U^\CC= U_1^\CC\oplus U_2^\CC$ and  $U_i$ is invariant under $T$ if and only if
 $U_i^\CC$ is invariant under the complex transformation $T$.

Denote with $\lambda_1,\lambda_2,\cdots,\lambda_s$ the different eigenvalues of the (complex)
 Ricci operator 
$\Rc$. Since the metric is left-invariant,  the eigenvalues of $\Rc$ are 
constant on $N$. 
The subspace of $\mn^\CC$ associated to the eigenvalue $\lambda_i$ is 
\begin{equation}\label{Vlambda} V_{\lambda_i}=\ker (\Rc-\lambda_i I)^{r_i},\end{equation}
where $r_i$ is the degree of $\lambda_i$ in the minimal polynomial of $\Rc$. The 
Jordan decomposition theorem states that 
\begin{equation}\label{decn}
\mn^\CC =V_{\lambda_1}\oplus V_{\lambda_2}\oplus\cdots \oplus V_{\lambda_s}.\end{equation}
Translating on the left the spaces  above, at a generic point $n\in N$ one obtains 
\begin{equation}
\label{tnc}T_nN^\CC= L_{n\,*}|_e V_{\lambda_1}\oplus L_{n\,*}|_e V_{\lambda_2}\oplus\cdots \oplus
 L_{n\,*}|_e V_{\lambda_s}.
\end{equation}
The subspace $L_{n\,*}|_e V_{\lambda_i}$ of $T_nN^\CC$ is that one corresponding to 
the eigenvalue $\lambda_i$ of the Ricci tensor at $n$, $\Rc_n$; that is, 
$$L_{n\,*}|_e V_{\lambda_i}=\ker(\Rc_n-\lambda_i I)^{r_i}.$$

Given an isometry $f$ of $N$, it holds 
\begin{equation}f_*|_n \,\Rc_n=\Rc_{f(n)}\,f_*|_n \quad\mbox{for all} 
\quad n\in N,\label{difatp}\end{equation}
 and this formula is also valid for the corresponding complexified linear transformations. 
The last two equations  yield to
\begin{eqnarray}\label{a}
u\in L_{n\,*}|_e V_{\lambda_i} &\Leftrightarrow& (\Rc_n-\lambda_{i} I)^{r_{i}} u=0\nonumber\\
&\Leftrightarrow& f_*|_n((\Rc_n-\lambda_{i} I)^{r_{i}} u)=0\nonumber\\
&\Leftrightarrow& (\Rc_{f(n)}-\lambda_{i} I)^{r_{i}} (f_*|_n u)=0\nonumber\\
&\Leftrightarrow& f_*|_n( u) \in L_{f(n)\,*}|_e  V_{\lambda_i} .\end{eqnarray}
Therefore the direct sum of vector spaces in (\ref{tnc}) is preserved by isometries. 

\begin{lem} \label{lm2} Let $(N,\bil)$ be a 2-step nilpotent Lie group such that $\bil$ is a pseudo-Riemannian left-invariant  metric for which the center is non-degene\-rate. Assume 
\begin{eqnarray}\label{condic}
\mv^\CC&=&V_{\lambda_1}\oplus\cdots\oplus V_{\lambda_j},\nonumber\\
\mz^\CC&=&V_{\lambda_{j+1}}\oplus\cdots\oplus V_{\lambda_s},
\end{eqnarray}
for  the different eigenvalues $\lambda_1,\lambda_2,\cdots,\lambda_s$ of the Ricci operator $\Rc$ with
 $V_{\lambda_i}$ the eigenspace  corresponding to $\lambda_i$. Then every isometry of $N$
 preserves the splitting $TN=\mv N\oplus \mz N$, that is,
$$\Iso(N)=\Iso^{spl}(N).$$
\end{lem}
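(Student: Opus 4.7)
The plan is to combine the spectral preservation property of the Ricci operator under isometries (equation (\ref{a})) with the hypothesis (\ref{condic}), which identifies the distributions $\mv N$ and $\mz N$ pointwise with specific generalized eigenspaces of $\Rc$. The main content is bookkeeping: there is no new geometric input beyond what has already been accumulated.

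First, I would translate the hypothesis to each point of $N$. Since the metric is left-invariant, the Ricci operator is left-invariant as well, so $L_{n*}|_e$ intertwines $\Rc_e$ with $\Rc_n$, and from (\ref{tnc}) the generalized eigenspace of $\Rc_n$ for $\lambda_i$ is exactly $L_{n*}|_e V_{\lambda_i}$. Combining with (\ref{condic}) and the fact that $\mv N$ and $\mz N$ are by definition the left-translates of $\mv$ and $\mz$, one gets
\begin{equation*}
 \mv N_n^\CC=\bigoplus_{i=1}^{j} L_{n*}|_e V_{\lambda_i},\qquad \mz N_n^\CC=\bigoplus_{i=j+1}^{s} L_{n*}|_e V_{\lambda_i},
\end{equation*}
for every $n\in N$.

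Next, let $f\in \Iso(N)$ be arbitrary and fix $n\in N$. Equation (\ref{a}) tells us that $f_*|_n$ maps each summand $L_{n*}|_e V_{\lambda_i}$ into the summand $L_{f(n)*}|_e V_{\lambda_i}$ with the same eigenvalue. Summing the first $j$ indices and then the remaining ones, we obtain
\begin{equation*}
 f_*|_n\bigl(\mv N_n^\CC\bigr)\subseteq \mv N_{f(n)}^\CC,\qquad f_*|_n\bigl(\mz N_n^\CC\bigr)\subseteq \mz N_{f(n)}^\CC.
\end{equation*}

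Finally, I would pass from the complexified inclusion back to the real one. Since $f_*|_n$ is the complexification of a real linear map, it commutes with complex conjugation, and $\mv N_n$, $\mz N_n$ are precisely the real forms of $\mv N_n^\CC$, $\mz N_n^\CC$. Therefore $f_*|_n\mv N_n\subseteq \mv N_{f(n)}$ and $f_*|_n\mz N_n\subseteq \mz N_{f(n)}$, showing $f\in \Iso^{spl}(N)$. The reverse inclusion $\Iso^{spl}(N)\subseteq \Iso(N)$ is trivial, so equality holds. There is no real obstacle in this argument; the subtlest point is merely checking that the descent from $\CC$ to $\RR$ is automatic because $f$ is a real map, and noting that the hypothesis (\ref{condic}) forces the set of eigenvalues contributing to $\mv^\CC$ to be disjoint from those contributing to $\mz^\CC$, which is what allows the isometry to ``see'' the splitting purely through the Ricci spectrum.
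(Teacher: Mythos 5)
Your proposal is correct and follows essentially the same route as the paper: translate the hypothesis (\ref{condic}) to each tangent space via left-invariance, apply the spectral preservation property (\ref{a}) to see that an isometry permutes nothing and sends each generalized eigenspace summand to the corresponding one at the image point, and then descend from $\CC$ to $\RR$. The only difference is that you spell out the real-form descent (via commutation with conjugation) slightly more explicitly than the paper does, which is a welcome clarification but not a new argument.
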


\begin{proof}
The hypothesis in (\ref{condic}) implies that for any $n\in N$, 
\begin{eqnarray}
\mv N_n^\CC&=& L_{n\,*}|_eV_{\lambda_1}\oplus\cdots\oplus L_{n\,*}|_e  V_{\lambda_j}, \nonumber\\
\mz N_n^\CC&=&L_{n\,*}|_e V_{\lambda_{j+1}}\oplus\cdots\oplus L_{n\,*}|_e V_{\lambda_s}.\nonumber\end{eqnarray}

Let $f$ be an isometry of $N$, then $f_*|_n L_{n\,*}|_e V_{\lambda_i}= 
L_{f (n)\,*}|_e V_{\lambda_i}$ as a consequence of the conditions in (\ref{a}). Therefore $$f_*|_n(\mv N_n^\CC)=\mv N_{f(n)}^\CC \quad
 \mbox{ and }\quad f_*|_n(\mz N_n^\CC)=\mz N_{f(n)}^\CC $$
from which we conclude $f_*|_n (\mv N_n)=\mv N_{f(n)}$, $ f_*|_n (\mz N_n)=\mz N_{f(n)}$ and  so 
$f\in \Iso^{spl}(N)$.
\end{proof}

A large family of nilpotent Lie algebras satisfying the hypothesis of the lemma above is the 
family of pseudo-$H$-type  Lie algebras. \footnote{We find this name for the first time in \cite{Ci}, see also \cite{JPP}.}

A nilpotent Lie algebra $\mn$ (or its corresponding simply connected  Lie group) equipped with a
metric $\bil$ for which the center is non-degenerate is said to be {\em
of pseudo-$H$-type} whenever it satisfies
\begin{equation}\label{tipoH} 
j(u)^2=-\lela u,u\rira I\qquad \mbox{ for all }u\in\mz.
\end{equation}
Positive definite metric Lie algebras satisfying (\ref{tipoH}) are already
known as $H$-type Lie algebras, introduced by Kaplan in \cite{Ka1}.  $H$-type Lie groups are 
2-step nilpotent and they are natural generalizations
of the Iwasawa $N$-groups associated to semisimple Lie groups of real rank one.

 Notice that pseudo-$H$-type Lie algebras are not necessarily
non-singular since  vectors of zero norm could satisfy (\ref{tipoH}).
For any nilpotent Lie group of pseudo-$H$-type the three groups  in
(\ref{isomgroup}) coincide. 

\begin{thm} \label{teo1} Let $(N,\bil)$ denote a  pseudo-$H$-type  Lie group. Then 
\begin{enumerate}
\item $\Iso^{aut}(N)=\Iso^{spl}(N)=\Iso(N).$
\item the scalar curvature of $(N,\bil)$ is negative.
\end{enumerate}
\end{thm}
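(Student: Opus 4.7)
My plan is to derive both parts from a single Ricci-eigenvalue computation. For (1), Proposition~\ref{autspl} already reduces the problem to the equality $\Iso(N)=\Iso^{spl}(N)$, and Lemma~\ref{lm2} tells us this will follow once we verify that the complexified eigenspaces of $\Rc$ split precisely along $\mv^\CC\oplus\mz^\CC$. So the entire theorem will rest on computing $\Rc|_{\mv}$ and $\Rc|_{\mz}$ explicitly for a pseudo-$H$-type group, and reading (2) off as the trace.

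For $\Rc|_{\mv}$, I would apply Proposition~\ref{pro1}(2) to an orthonormal basis $\{a_1,\ldots,a_p\}$ of $\mz$: the defining identity $j(a_k)^2=-\varepsilon_k I$ collapses each summand in (\ref{eq:tv}) to $-I$, yielding $\Rc|_{\mv}=-\frac{p}{2}I$ with $p=\dim\mz$. For $\Rc|_{\mz}$, I would polarize the pseudo-$H$-type identity to obtain the Clifford-type relation $j(u)j(u')+j(u')j(u)=-2\lela u,u'\rira I$ on $\mv$, and feed this into Proposition~\ref{pro1}(3): taking traces gives $\operatorname{trace}(j(u)j(u'))=-m\lela u,u'\rira$ where $m=\dim\mv$, hence $\Rc|_{\mz}=\frac{m}{4}I$.

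With these values in hand, the conclusions are immediate. The two eigenvalues $-p/2$ and $m/4$ have opposite signs, and their eigenspaces are exactly $\mv$ and $\mz$, so after complexification hypothesis (\ref{condic}) of Lemma~\ref{lm2} is satisfied; combined with Proposition~\ref{autspl} this gives part (1). For part (2), the scalar curvature is $\operatorname{trace}(\Rc)=m(-p/2)+p(m/4)=-mp/4$, which is strictly negative whenever $\mv$ and $\mz$ are both nontrivial.

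I do not anticipate a serious obstacle: the pseudo-$H$-type identity forces $\Rc$ to act as a scalar on each factor, so the argument is essentially a short algebraic computation. The one subtle point I would be careful about is that the polarization step and the ensuing trace identity on $\mz$ must be valid for \emph{all} $u,u'\in\mz$, including null vectors, since for a signature metric the center may contain isotropic directions; but the identity is purely formal, relying only on linearity of $j$ and bilinearity of $\bil$, so the signature plays no role here.
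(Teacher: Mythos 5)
Your proposal is correct and follows essentially the same route as the paper: reduce part (1) to Proposition~\ref{autspl} plus Lemma~\ref{lm2}, compute $\Rc|_{\mv}=-\tfrac{p}{2}I$ and $\Rc|_{\mz}=\tfrac{m}{4}I$ from Proposition~\ref{pro1} and the pseudo-$H$-type identity, and read off $s=-mp/4$. The only cosmetic difference is that you polarize the identity $j(u)^2=-\lela u,u\rira I$ before taking traces, whereas the paper computes $\Ric(u,u)$ first and polarizes the resulting quadratic form; these are equivalent, and your caution about isotropic directions in $\mz$ is well placed but, as you note, immaterial since the argument is purely bilinear.
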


\begin{proof} The first equality in 1. holds after Proposition \ref{autspl}. We use the previous lemma to prove the second one. Indeed we show that for pseudo-$H$-type Lie algebras, the Ricci operator is diagonalizable and negative (resp. positive) definite on $\mv$ (resp. on $\mz$).
 
 Let $\{a_k\}_{k=1}^p$ be an orthonormal basis of $\mz$. The fact of $N$ being of
pseudo-$H$-type implies $j(a_k)^2=-\lela a_k,a_k\rira I_m=-\varepsilon_k I_m$ with $\varepsilon_k=\pm 1$
 for 
$k=1,\ldots, p$ and $m=\dim \mv$. Then, according to (\ref{eq:tv}) the Ricci operator satisfies
\begin{equation}
\Rc|_{\mv}=\frac12\sum_{k=1}^p\varepsilon_k j(a_k)^2= -\frac12 \sum_{k=1}^p\varepsilon_k^2
I_m=-\frac{p}{2} I_m
\end{equation}
so $\Rc$ is negative definite on $\mv$.

On the other hand for $u\in \mz$
$$\Ric(u,u)=-\frac14 \operatorname{trace} (j(u)^2)=\frac14
\operatorname{trace} (\lela u,u\rira I_m)=\frac{\lela u,u \rira}{4}\,m.$$
 Hence $\lela \Rc u,u\rira =\Ric(u,u)=\lela \frac{m}{4} u,u \rira$ 
for all $u\in\mz$.
Polarizing this identity one gets $\lela \Rc u,u'\rira=\lela \frac{m}{4} u,u' \rira$
for any $u,u'\in\mz$ and therefore $\Rc=\frac{m}4\,I_p$ on $\mz$. In particular $\Rc$ is positive definite on $\mz$.

Clearly, the eigenvalues of $\Rc$ are $\lambda_1=-p/2$ and $\lambda_2=m/4$ and the subspace
 $V_{\lambda_i}$ in (\ref{Vlambda}) is the eigenspace corresponding to $\lambda_i$, 
for each $i=1,2$. Moreover $\mv=V_{\lambda_1}$ and $\mz=V_{\lambda_2}$, so requirements 
(\ref{condic}) are satisfied and the first assertion of the theorem follows.

From the proof above, the trace of the Ricci operator 
 is  $s=-pm/4$ where $p=\dim \mz$ and $m=\dim \mv$, hence negative and 2. follows. 
\end{proof}


A natural question at this point is the validity of the converse of the previous result. 
The next example gives a negative answer, that is pseudo-$H$-type Lie groups are not the only ones for which $\Iso^{aut}(N)=\Iso(N)$.
 
 Recall that solvable Lie groups endowed with a Riemannian left-invariant me\-tric have non-positive 
scalar curvature (see Theorem 6 \cite{Je}). As the next example shows, this assertion does not hold in the indefinite case.
 

\begin{exa}\label{exa1} Let  $\Hh_3$ be the Heisenberg Lie group  and denote with $\hh_3$ its Lie algebra which  is spanned by vectors $e_1,e_2,e_3$ satisfying the non-zero Lie bracket relation 
$[e_1,e_2]=e_3$. Consider the left-invariant metric on $\Hh_3$ induced by the metric on 
$\hh_3$ given by $$-\lela e_1,e_1\rira=\lela e_2,e_2\rira=\lela e_3,e_3\rira=1; $$ 
in particular the center of $\hh_3$ is non-degenerate. After the computation of 
$j(e_3)$ it holds $j(e_3)^2= I$ implying that  $\hh_3$ is not a pseudo-$H$-type Lie algebra. 

By Proposition \ref{pro1}, the Ricci operator satisfies  $\Rc|_\mv=\frac12 I$ and $\Rc(e_3)=-\frac12 e_3$. So $\Rc$ is diagonalizable and the scalar curvature of $(\Hh_3,\bil)$ is $s=1/2$. Also $\mv=V_{1/2}$ and $\mz=V_{-1/2}$, thus Lemma \ref{lm2}  
  leads us to $$\Iso(\Hh_3)=\Iso^{aut}(\Hh_3). $$ 
In \cite{BOV1} it is shown that this non-flat pseudo-Riemannian nilpotent Lie group
 satisfies $\Iso^{aut}(\Hh_3)=\Hh_3\rtimes\Or(1,1)$. 
 \end{exa}

\begin{rem} The Lorentzian nilpotent Lie group $(N, g_N)$ in the previous section  shows that in the pseudo-Riemannian case the isometries do not need to preserve the splitting $\vv N \oplus \zz N$, even when the center is non-degenerate (compare with \cite{CP}).
Using Proposition \ref{pro1} one gets that the Ricci tensor of this manifold is non-zero but nilpotent. Then the Lie algebra $\mn$ of $N$ is the subspace defined in (\ref{Vlambda}) corresponding to the zero eigenvalue, that is $V_0=\mn$. Since $V_0$ has non-trivial intersections with $\mv$ and $\mz$, a decomposition as in (\ref{condic}) is not possible.

The fact that the Ricci operator is nilpotent implies that the scalar curvature of the Lie groups $(G,g_G)$ and $(N,g_N)$ are zero but they are not Ricci flat. Note that $G$ is 3-step solvable (see Theorem 7 in \cite{Je}).
\end{rem}

Once one knows that for $(N, \bil)$ it holds $H^{aut_N}=H$, the isometry group can be computed as follows. Recall that since $N$
  is simply connected,  we do not distinguish between the group of automorphisms of $N$ and of $\mn$. The isotropy group $H$ is
given by
\begin{equation}\label{auton}
H = \{(\phi, T) \in \Or(\mz, \bil_{\mz}) \times \Or(\mv, \bil_{\mv}) : Tj(w)T^{-1} = j(\phi w), \quad w \in \mz\}
\end{equation}
where $\Or(V, \bil)$ denotes the group of isometric linear maps of $(V, \bil)$. The Lie algebra of $G$ is given by 
\begin{equation}\label{deron}
\hh = \{(A,B) \in \sso(\mz, \bil_{\mz}) \times \sso(\mv, \bil_{\mv}) : [B, j(w)] = j(Aw), \quad w \in \mz\}
\end{equation}
where $\sso(V, \bil)$ denotes the set of skew-symmetric linear maps of $(V, \bil)$. 
In fact, let  $\psi$ denote a orthogonal automorphism of $(\mn, \bil)$. Thus  $\psi(\mz)\subseteq \mz$ and since $\mv = \mz^{\perp}$ then  $\psi(\mv)\subseteq \mv$. Set $\phi := \psi_{|_{\mz}}$ and $T :=  \psi_{|_{\mv}}$, thus $(\phi, T) \in
\Or(\mz, \bil_{\mz}) \times \Or(\mv, \bil_{\mv})$ such that
$$\begin{array}{rcl}
\lela \phi^{-1}[u, v]; x\rira & = &\lela [Tu, Tv]; j(x)\rira \quad \mbox{ if and only if }\\
\lela j(\phi x)u, v \rira  & = & \lela j(x)Tu, Tv\rira
\end{array}$$
which implies (\ref{auton}). By derivating (\ref{auton}) one gets (\ref{deron}).
The following proposition is the correct version of that in \cite{Ov}:

\begin{pro} Let $N$ denote a simply connected 2-step nilpotent Lie group
endowed with a left-invariant pseudo-Riemannian metric, with respect to which
the center is non-degenerate and such that $H=H^{aut_N}$. Then the group of isometries is 
$$\Iso(N)=N \rtimes H^{aut_N} $$
where $N$ acts as the group of left-translations by elements of $N$ and the isotropy subgroup $H$ is given by  (\ref{auton}) with Lie algebra as in (\ref{deron}).
\end{pro}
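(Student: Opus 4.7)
The plan is to combine the general decomposition of $\Iso(N)$ recorded in Section \ref{sec1} with the hypothesis $H = H^{aut_N}$, and then to read off the explicit description of $H$ from the defining identity of $j$. Concretely, the factorization $f = L_g \circ \varphi$ gives $\Iso(N) = N \cdot H$, and the hypothesis rewrites this as $\Iso(N) = N \cdot H^{aut_N} = \Iso^{aut}(N)$. Since Section \ref{sec1} already observed that $N$ is normal in $\Iso^{aut}(N)$ via the relation $\phi \circ L_x = L_{\phi(x)} \circ \phi$, and $N \cap H = \{e\}$ because $H$ fixes the identity, the semidirect decomposition $\Iso(N) = N \rtimes H^{aut_N}$ follows with no additional work.

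For the explicit description of the isotropy, I would identify each $\psi \in H^{aut_N}$ with its differential $\psi_*$ at $e$, which is a Lie algebra automorphism of $\mn$ that is also an isometry of $(\mn, \bil)$. Any Lie algebra automorphism preserves the center $\mz$, and because $\psi_*$ is orthogonal with $\bil|_\mz$ non-degenerate, it must also preserve $\mv = \mz^\perp$. Setting $\phi := \psi_*|_\mz$ and $T := \psi_*|_\mv$ gives a pair in $\Or(\mz, \bil_\mz) \times \Or(\mv, \bil_\mv)$, and the only remaining requirement is Lie bracket compatibility, namely $[Tu, Tv] = \phi[u, v]$ for all $u, v \in \mv$. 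Pairing this identity with an arbitrary $w \in \mz$ and applying the defining formula (\ref{eq:jota}) of $j$ converts it into
$$\lela j(\phi w) Tu, Tv \rira = \lela [u,v], w \rira = \lela T j(w) T^{-1} Tu, Tv \rira,$$
which, by non-degeneracy of $\bil|_\mv$, is equivalent to $T j(w) T^{-1} = j(\phi w)$. This is exactly (\ref{auton}), and the converse is immediate by running the same manipulation backwards.

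The description (\ref{deron}) of the Lie algebra $\hh$ is then obtained by differentiating (\ref{auton}) at $(I, I)$: a curve $(\phi_s, T_s)$ with velocity $(A, B)$ has $A$ and $B$ skew-symmetric because $(\phi_s, T_s)$ is orthogonal, while differentiating $T_s j(w) T_s^{-1} = j(\phi_s w)$ at $s = 0$ yields $[B, j(w)] = j(Aw)$. The substantive content of the statement is the identification $\Iso(N) = \Iso^{aut}(N)$, which is supplied directly by the hypothesis, so I do not foresee a serious obstacle; the remainder is algebraic bookkeeping that mirrors, in concise form, the computation the authors display just before the statement.
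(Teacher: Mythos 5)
Your proposal is correct and follows essentially the same route as the paper: the semidirect structure comes from $\Iso(N)=N\cdot H=N\cdot H^{aut_N}=\Iso^{aut}(N)=N\rtimes H^{aut_N}$ using the normality observation from Section \ref{sec1}, and the identification of $H^{aut_N}$ with the pairs $(\phi,T)$ satisfying $Tj(w)T^{-1}=j(\phi w)$ is exactly the computation the authors display just before the statement, with (\ref{deron}) obtained by differentiation. No gaps.
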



\section{Isometric actions for degenerate center}


Let $N$ denote a simply connected nilpotent Lie group with Lie algebra $\mn$ and set a direct sum of vector spaces
$$\mn = \mv \oplus \mz.$$
Note that $\mv$ does not necessarily coincide with $\mz^{\perp}$. By translations on the left, set $TN=\mv N \oplus \mz N$, a decomposition of left-invariant distributions. Define $\Iso^{spl}(N)$ as the group 
of isometries preserving this splitting. 

Assume $(N, \bil)$ is endowed with a bi-invariant metric (in this case $\mz^{\perp} \subseteq \mz$). Here we show that  the 
three  groups $\Iso(N)$, $\Iso^{aut}(N)$ and $\Iso^{spl}(N)$  could be very different and moreover there could be no relationship between $\Iso^{aut}(N)$ and $\Iso^{spl}(N)$.

\begin{lem}\label{lm1}
Let $(N, \bil)$ be a 2-step nilpotent Lie group equipped with a bi-invariant metric, let $\mn$ denote its Lie algebra. Then for any direct sum decomposition $\mn=\mz\oplus \mv$ the
automorphism $Ad(n)$ does not preserve $\mv$, whenever $n$ is non-central.
\end{lem}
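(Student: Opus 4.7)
My plan is to exploit two ingredients: the simple form taken by $\Ad(n)$ in a 2-step nilpotent group, and the incompatibility between the direction of $[X,Y]$ (which lies in $\mz$) and the requirement that it remain in $\mv$. Interestingly, the bi-invariance of the metric plays no role in the argument itself; only the structural assumption that $\mn$ is 2-step nilpotent is used.

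First, since $N$ is simply connected and nilpotent, the exponential map $\exp:\mn\to N$ is a diffeomorphism, so I write $n=\exp(X)$ for a unique $X\in\mn$. The element $n$ is central in $N$ precisely when $\Ad(n)=\mathrm{id}$, equivalently $\ad(X)=0$, equivalently $X\in\mz$. Hence ``$n$ non-central'' is the same as $X\notin\mz$.

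Next, the 2-step nilpotent hypothesis gives $\ad(X)^2=0$ for every $X\in\mn$, so the series for $\Ad$ collapses to
\[
\Ad(n)=e^{\ad(X)}=I+\ad(X).
\]
For any $Y\in\mv$ this yields $\Ad(n)(Y)=Y+[X,Y]$. Since $[X,Y]\in[\mn,\mn]\subseteq\mz$ and $Y\in\mv$, the direct sum $\mn=\mv\oplus\mz$ forces $\mv\cap\mz=\{0\}$, so $\Ad(n)(Y)\in\mv$ can only happen when $[X,Y]=0$. If $\Ad(n)$ were to preserve $\mv$, we would therefore have $[X,\mv]=0$; combined with $[X,\mz]=0$ (as $\mz$ is the center), this produces $[X,\mn]=0$, i.e.\ $X\in\mz$, contradicting that $n$ is non-central.

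The \emph{main obstacle} is really none: the entire argument is a direct computation once one uses the 2-step condition to linearise $\Ad(n)$ and observes the clash $[X,\mv]\subseteq\mz$ versus $\mv\cap\mz=\{0\}$. The only mild subtlety worth flagging is that the bi-invariance of $\bil$, prominent in the statement, is never invoked; the conclusion is genuinely a purely algebraic fact about 2-step nilpotent Lie algebras together with any vector-space complement to the center.
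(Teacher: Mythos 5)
Your proof is correct and follows essentially the same route as the paper's: write $n=\exp(X)$, use the 2-step condition to get $\Ad(n)=I+\ad(X)$ (the paper writes a harmless factor $\tfrac12$ here), and observe that $[X,\mv]\subseteq\mz$ clashes with $\mv\cap\mz=\{0\}$ unless $X$ is central. Your side remark that bi-invariance is never used is accurate and consistent with the paper's own argument.
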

\begin{proof}
Since the Lie group is 2-step nilpotent,  for any $n\in N$ there exists $w\in \mn$ such that $n=\exp\,(w)$  and $Ad(n)=I+\frac12\, \ad_w$. Consider a direct sum as vector spaces of the form $\mn=\mv\oplus\mz$. Suppose $n$ is non-central then $n=\exp (w)$ with  $w\notin \mz$ and let  $u\in \mn$ be such that $[w,u]\neq 0$.
Write $u=b+ a$ with $a\in \mz$ and $b\in\mv-\{0\}$.
Then $Ad(n)(b)=b+\frac12 [w,b]=b+\frac12 [w,u]$ having non-zero component on
$\mz$.
\end{proof}

\begin{rem}
Let $N$ denote a  pseudo-Riemannian nilpotent Lie group admitting  a lattice $\Gamma$ such that $\Gamma \backslash N$ is compact. Let $\Iso_0^{spl}(\Gamma \backslash N)$ denote the connected component of the subgroup of isometries preserving a fixed  splitting of $T(\Gamma \backslash N)$. In \cite{CP2} the authors conclude that $\Iso_0^{spl}(\Gamma \backslash N)\simeq T^m$ an $m$-dimensional torus. Its proof  should make use of the fact that there are no non-trivial inner automorphisms in this  group (Lemma \ref{lm1}).
\end{rem}

The fact that $N$ is endowed with a bi-invariant metric implies that the conjugation map $\chi_n$ given by $\chi_n(g)=n g n^{-1}$ is an isometry for all $n\in N$. The lemma above says that none of these automorphisms preserve any possible splitting $TN= \mv\,N \oplus
\mz\,N$ with the exception of the trivial ones, thus $\Iso^{spl}(N)\neq \Iso^{aut}(N)$ and $\Iso^{spl}(N)\neq \Iso (N)$ since $N$ is non abelian.

As already mentioned for bi-invariant metrics, the map: $g \to g^{-1}$ is an isometry which is not an automorphism, unless the group is abelian. Thus $\Iso^{aut}(N) \neq \Iso(N)$.
As stated above, the group $\Iso^{aut}(N)$ is not contained into the group $\Iso^{spl}(N)$. The next example shows that $\Iso^{spl}(N)$ could not be contained in $\Iso^{aut}(N)$.

\begin{exa} 
Consider  $\R^6$ with the canonical differentiable structure and let $g$ denote the following pseudo-Riemannian metric on $\RR^6$:
\begin{equation}\label{metric2} g=d x_1dx_6+dx_3dx_4-dx_2dx_5.
\end{equation}
This manifold admits an isometric transitive and simple action of the 2-step nilpotent Lie group $N$ which is modelled on $\R^6$ with multiplication operation such that for $p=(x_1,x_2,x_3,x_4,x_5,x_6)$ and $q=(y_1,y_2,$ $y_3,y_4,y_5,y_6)$ it holds
\begin{eqnarray}
p\cdot q&=&\left( x_1+y_1,x_2+y_2,x_3+y_3,x_4+y_4+\frac12 (x_1y_2-x_2y_1),\right.\nonumber\\
&&\;\quad\left. x_5+y_5+\frac12 (x_1y_3-x_3y_1), x_6+y_6+\frac12 (x_2y_3-x_3y_2)\right).
\end{eqnarray}
The corresponding metric on $N$ induced by (\ref{metric2})  is invariant under translations on the left and also on the right,  hence $g$  is a  bi-invariant (pseudo-Riemannian) metric on $N$. 
Its Lie algebra $\mn$ admits a basis $\{e_i\}_{i=1}^6$ obeying the  non-zero Lie-bracket relations $$[e_1,e_2]=e_4,\qquad [e_1,e_3]=e_5,\qquad [e_2,e_3]=e_6. $$
The Lie algebra $\mn$ is the free 2-step nilpotent Lie algebra on three generators and $N$ is its corresponding simply connected Lie group. At the Lie algebra level, the bi-invariant metric (\ref{metric2}) induces the  ad-invariant metric  verifying \cite{dBO}
  $$\lela e_1,e_6\rira=\lela e_3,e_4\rira =-\lela e_2,e_5\rira=1.$$
  
  The center $\mz$ of $\mn$ is spanned by $e_4,e_5,e_6$ and it is totally isotropic ($\mz^\bot=\mz$\,). Moreover
  \begin{equation}\mn=\mv\oplus \mz \label{spl} \end{equation}
   where $\mv=span \{e_1,e_2,e_3\}$ is also a totally isotropic subspace.
The splitting of $TN$ by left-invariant distributions associated to (\ref{spl}) at each point $p$ of $N$ is given by 
 $$\mv N_p=span\{X_1|_p,X_2|_p,X_3|_p\} \quad\mbox{ and }\quad\mz {N}_p=span\{X_4|_p,X_5|_p,X_6|_p\}$$
where $X_i$ is the left-invariant vector field of $N$ such that ${X_i}_{|_0}=e_i$. Canonical computations show that $\lela \partial_i,\partial_j\,\rira=\lela X_i,X_j\,\rira$ for all $i,j$, where $\partial_i=\frac{\partial}{\partial x_i}$. 

For each $\tau\in \RR$ consider the  diffeomorphism $F^\tau$ defined as
$$\begin{array}{l}
F^{\tau}(x_1,x_2,x_3,x_4,x_5,x_6)=(\cosh \tau\, x_1+\sinh \tau\,x_3,x_2,\sinh \tau\, x_1+\cosh \tau\, x_3,\\\hspace{4.2cm}\cosh \tau\, x_4-\sinh \tau\, x_6,x_5,
-\sinh \tau\, x_4+\cosh \tau\, x_6)\end{array}$$
which is an isometry of $N$. Indeed,  its differential at $p$, in the ordered basis $\{\partial_1,\,\partial_3,\,\partial_4,\,\partial_6,\,\partial_2,\,\partial_5\}$ of $T_pN$, is
$$dF^{\tau}_p= \left(
\begin{array}{ccc}
A&0&0\\
0&A^{-1}&0\\
0&0&I_{2\times 2}\end{array}\right)\quad\mbox{ where }\quad A=\left(
\begin{array}{cc}
\cosh \tau &\sinh \tau\\
\sinh \tau &\cosh \tau\end{array}\right)\in \Or(1,1).$$

The map $F^{\tau}$ preserves the metric in (\ref{metric2}) for all $\tau \in \RR$. Moreover, it preserves the splitting $TN=\mv N\oplus \mz N $. In fact $dF^{\tau}_p (\mz {N}_p)=\mz {N}_{F^\tau (p)}$ and
\begin{eqnarray}
dF^{\tau}_pX_1|_p
&=&\cosh \tau\,X_1|_{F^{\tau}(p)}+\sinh \tau\, X_3|_{F^{\tau}(p)}\in\mv N_{F^\tau(p)},\nonumber\\
dF^{\tau}_pX_2|_p&=&X_2|_{F^{\tau}(p)}\in \mv N_{F^{\tau}(p)},\nonumber\\
dF^{\tau}_pX_3|_p
&=&\sinh \tau\,X_1|_{F^{\tau}(p)}+\cosh \tau\, X_3|_{F^{\tau}(p)}\in\mv {N}_{F^{\tau}(p)}.\nonumber
\end{eqnarray}
Therefore $dF^{\tau}_p(\mv {N}_p) =\mv {N}_{F^{\tau}(p)}$ and $F^{\tau}\in \Iso^{spl}(N)$  

Finally, notice that $dF^{\tau}_0$ is not a Lie algebra isomorphism if $\tau\neq 0$, therefore $F^{\tau}\notin \Iso^{aut}(N)$  for $\tau\neq 0$. Furthermore, $\Iso(N)=N\cdot \Or(3,3)$ by Lemma \ref{iso}. 
\end{exa}

We have already proved that on the Lie group $N$ of the previous example  there
 are isometries preserving a fixed splitting which are not automorphisms.

\begin{pro}Let $(N, \bil)$ denote  a 2-step nilpotent Lie group endowed with a bi-invariant metric. Then the center is degenerate \cite{dBO} and
\begin{itemize}
\item $\Iso^{spl}(N)\neq\Iso^{aut}(N)$;
\item $\Iso^{spl}(N)\subsetneq\Iso(N)$ and $\Iso^{aut}(N)\subsetneq\Iso(N)$.
\end{itemize}
\end{pro}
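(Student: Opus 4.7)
The plan is to assemble each of the four claims from the tools provided above: the degeneracy of $\mz$ is a direct ad-invariance calculation, and the three non-coincidences are witnessed by the two natural families of isometries that bi-invariance makes available, namely inner automorphisms and the group inversion. The single non-trivial input is Lemma \ref{lm1}.

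First I would handle the degeneracy of $\mz$. Since $\mn$ is 2-step nilpotent one has $[\mn,\mn]\subseteq \mz$, and ad-invariance of $\bil$ gives
\[
\lela z,[x,y]\rira \,=\, \lela [z,x],y\rira \,=\, 0 \qquad \text{for every } z\in\mz,\; x,y\in\mn,
\]
so $[\mn,\mn]\subseteq \mz\cap \mz^{\perp}$. As $N$ is non-abelian, $[\mn,\mn]\neq 0$, hence $\mz\cap \mz^{\perp}\neq 0$ and the center is degenerate.

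For the first two non-coincidences I would fix a direct sum decomposition $\mn=\mv\oplus\mz$, pick a non-central $n\in N$, and look at the conjugation $\chi_n$. Bi-invariance of $\bil$ makes $\chi_n$ an isometric automorphism, so $\chi_n\in \Iso^{aut}(N)\cap \Iso(N)$, while its differential at $e$ is $\Ad(n)$, which by Lemma \ref{lm1} does not send $\mv$ into $\mv$. Hence $\chi_n\notin \Iso^{spl}(N)$, yielding simultaneously $\Iso^{spl}(N)\neq \Iso^{aut}(N)$ and $\Iso^{spl}(N)\subsetneq \Iso(N)$.

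For the last strict inclusion $\Iso^{aut}(N)\subsetneq \Iso(N)$, I would use the group inversion $\iota\colon g\mapsto g^{-1}$, which is an isometry by property (3) of bi-invariant metrics. Since $N$ is 2-step non-abelian, $\iota$ is not a homomorphism, so $\iota\in \Iso(N)\setminus \Iso^{aut}(N)$. The only non-trivial ingredient is Lemma \ref{lm1}; the rest is a short audit of how bi-invariance supplies the extra isometries $\chi_n$ and $\iota$ and how each one violates one of the defining conditions of $\Iso^{spl}(N)$ or $\Iso^{aut}(N)$, so no serious obstacle remains.
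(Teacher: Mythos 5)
Your proposal is correct and follows essentially the same route as the paper: degeneracy of the center via ad-invariance applied to $[\mn,\mn]\subseteq\mz$, the inner isometries $\chi_n$ together with Lemma \ref{lm1} to separate $\Iso^{spl}(N)$ from both $\Iso^{aut}(N)$ and $\Iso(N)$, and the inversion map to separate $\Iso^{aut}(N)$ from $\Iso(N)$. The only content the paper adds beyond this is an explicit example showing that moreover $\Iso^{spl}(N)\not\subseteq\Iso^{aut}(N)$ in the other direction, which is not required by the statement.
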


\end{document}